\theoremstyle{plain}
\newtheorem{thm}{Theorem}[section]
\newtheorem{prop}[thm]{Proposition}
\newtheorem{lem}[thm]{Lemma}
\newtheorem{cor}[thm]{Corollary}
\theoremstyle{definition}
\newtheorem{dfns-rems}[thm]{Definitions and Remarks}
\newtheorem{notas-rems}[thm]{Notations and Remarks}
\newtheorem{exmps-rems}[thm]{Examples and Remarks}
\DeclareMathOperator{\depth}{depth}
\begin{document}


\title[Depth of symbolic powers of edge ideals]{On the depth of symbolic powers of edge ideals of graphs}


\author[S. A. Seyed Fakhari]{S. A. Seyed Fakhari}

\address{S. A. Seyed Fakhari, School of Mathematics, Statistics and Computer Science,
College of Science, University of Tehran, Tehran, Iran.}

\email{aminfakhari@ut.ac.ir}

\begin{abstract}
Assume that $G$ is a graph with edge ideal $I(G)$ and star packing number $\alpha_2(G)$. We denote the $s$-th symbolic power of $I(G)$ by $I(G)^{(s)}$. It is shown that the inequality $\depth S/(I(G)^{(s)})\geq \alpha_2(G)-s+1$ is true for every chordal graph $G$ and every integer $s\geq 1$. Moreover, it is proved that for any graph $G$, we have $\depth S/(I(G)^{(2)})\geq \alpha_2(G)-1$.
\end{abstract}


\subjclass[2000]{Primary: 13C15, 13F55, 05E40}


\keywords{Chordal graphs, Depth, Edge ideal, Star packing number, Symbolic powers}


\thanks{}


\maketitle


\section{Introduction} \label{sec1}

Let $\mathbb{K}$ be a field and $S = \mathbb{K}[x_1,\ldots,x_n]$  be the
polynomial ring in $n$ variables over $\mathbb{K}$. Computing and finding bounds for the depth (or equivalently, projective dimension) of homogenous ideals of $S$ and their powers have been studied by  several authors (see e.g., \cite{b'}, \cite{chhktt}, \cite{ds}, \cite{fm}, \cite{htt}, \cite{hh''}, \cite{hktt}, \cite{nt}).

In \cite{fhm}, Fouli, H${\rm \grave{a}}$ and Morey introduced the notion of {\it initially regular sequence}. Using this notion, they provided a method for estimating the depth of a homogenous ideal. To be more precise, let $I\subset S$ be a homogenous ideal and let $\{b_{i,j} \mid 1\leq i\leq q, 0\leq j\leq t_i\}$ be a subset of distinct variables of $S$. Suppose ${\rm in_<(I)}$ is the initial ideal of $I$ with respect to a fixed monomial order $<$ and assume that $G({\rm in}_<(I))=\{u_1, \ldots, u_m\}$ is the set of minimal monomial generators of ${\rm in}_<(I)$. It is shown in \cite[Theorem 3.11]{fhm} that $\depth S/I\geq q$, provided that the following conditions hold.
\begin{itemize}
\item[(i)] The monomials $u_1, u_2, \ldots, u_m$ are not divisible by $b_{i,j}^2$ for $1\leq i\leq q$ and $1\leq j\leq t_i$.
\item[(ii)] For $i=1, 2, \ldots, q$, if a monomial in $\{u_1, \ldots, u_m\}$ is divisible by $b_{i,0}$, then it is also divisible by $b_{i,j}$, for some integer $1\leq j\leq t_i$,
\end{itemize}

In Section \ref{sec2}, we provide an alternative proof for this result (see Proposition \ref{depin}). Our proof is based on a short exact sequence argument, while in \cite{fhm}, the authors construct an initially regular sequence to prove their result.

Fouli, H${\rm \grave{a}}$ and Morey \cite{fhm1} observed that the above result provides a combinatorial lower bound for the depth of edge ideals of graphs. Indeed, for every graph $G$ with edge ideal $I(G)$, we have$$\depth S/I(G)\geq \alpha_2(G),$$where $\alpha_2(G)$ denotes the so-called star packing number of $G$ (see Section \ref{sec2} for the definition of star packing number and see Corollary \ref{spn} for more details about the above inequality). It is proven in \cite[Theorem 3.7]{fhm1} that the above inequality can be extended to powers of $I(G)$ when $G$ is a forest. More precisely, for every forest $G$ and for every integer $s\geq 1$, the inequality
\[
\begin{array}{rl}
\depth S/I(G)^s \geq \alpha_2(G)-s+1
\end{array} \tag{$\dag$} \label{dag}
\]
holds. On the other hand, we know from \cite[Theorem 5.9]{svv} that for every forest $G$, the $s$-th ordinary and symbolic powers of $I(G)$ coincide. Hence, inequality (\ref{dag}) essentially says that for every forest $G$ and any positive integer $s$,
\[
\begin{array}{rl}
\depth S/I(G)^{(s)} \geq \alpha_2(G)-s+1.
\end{array} \tag{$\ddag$} \label{ddag}
\]

In Theorem \ref{main}, we generalize \cite[Theorem 3.7]{fhm1} by proving inequality (\ref{ddag}) for any chordal graph. Moreover, we show that inequality (\ref{ddag}) is true for $s=2$ and for any graph $G$ (see Theorem \ref{2power}).


\section{Preliminaries and known results} \label{sec2}

In this section, we provide the definitions and the known results which will be used in the next sections.

Let $G$ be a simple graph with vertex set $V(G)=\big\{x_1, \ldots,
x_n\big\}$ and edge set $E(G)$. For a vertex $x_i$, the {\it neighbor set} of $x_i$ is $N_G(x_i)=\{x_j\mid x_ix_j\in E(G)\}$. We set $N_G[x_i]=N_G(x_i)\cup \{x_i\}$ and call it the {\it closed neighborhood} of $x_i$. The cardinality of $N_G(x_i)$ is the {\it degree} of $x_i$ and will be denoted by ${\rm deg}_G(x_i)$. For every subset $U\subset V(G)$, the graph $G\setminus U$ has vertex set $V(G\setminus U)=V(G)\setminus U$ and edge set $E(G\setminus U)=\{e\in E(G)\mid e\cap U=\emptyset\}$. A subgraph $H$ of $G$ is called {\it induced} provided that two vertices of $H$ are adjacent if and only if they are adjacent in $G$. A graph $G$ is called {\it chordal} if it has no induced cycle of length at least four. A subset $W$ of $V(G)$ is a {\it clique} of $G$ if every two distinct vertices of $W$ are adjacent in $G$. A vertex $x$ of $G$ is a {\it simplicial vertex} if $N_G(x)$ is a clique. It is well-known that every chordal graph has a simplicial vertex. A subset $C$ of $V(G)$ is a {\it vertex cover} of $G$ if every edge of $G$ is incident to at least one vertex of $C$. A vertex cover $C$ is a {\it minimal vertex cover} if no proper subset of $C$ is a vertex cover of $G$. The set of minimal vertex covers of $G$ will be denoted by $\mathcal{C}(G)$. A subset $A$ of $V(G)$ is called an {\it independent subset} of $G$ if there are no edges among the vertices of $A$. Obviously, $A$ is independent if and only if $V(G)\setminus A$ is a vertex cover of $G$.

The {\it edge ideal} of a graph $G$ is defined as$$I(G)=\big(x_ix_j \, |\,  x_ix_j\in E(G)\big).$$For a subset $C$ of $\big\{x_1, \ldots, x_n\big\}$, we denote by $\mathfrak{p}_C$, the monomial prime ideal which is generated by the variables belonging to $C$. It is well-known that for every graph $G$,$$I(G)=\bigcap_{C\in \mathcal{C}(G)}\mathfrak{p}_C.$$

Let $I$ be an ideal of $S$ and let ${\rm Min}(I)$ denote the set of minimal primes of $I$. For every integer $s\geq 1$, the $s$-th {\it symbolic power} of $I$,
denoted by $I^{(s)}$, is defined to be$$I^{(s)}=\bigcap_{\frak{p}\in {\rm Min}(I)} {\rm Ker}(S\rightarrow (S/I^s)_{\frak{p}}).$$Let $I$ be a squarefree monomial ideal in $S$ and suppose that $I$ has the irredundant
primary decomposition $$I=\frak{p}_1\cap\ldots\cap\frak{p}_r,$$ where every
$\frak{p}_i$ is an ideal generated by a subset of the variables of
$S$. It follows from \cite[Proposition 1.4.4]{hh} that for every integer $s\geq 1$, $$I^{(s)}=\frak{p}_1^s\cap\ldots\cap
\frak{p}_r^s.$$We set $I^{(s)}=S$, for any integer $s\leq 0$.

It is clear that for any graph $G$ and every integer $s\geq 1$,$$I(G)^{(s)}=\bigcap_{C\in \mathcal{C}(G)}\mathfrak{p}_C^s.$$

As it was mentioned in introduction, Fouli, H${\rm \grave{a}}$ and Morey \cite{fhm} detected a method to bound the depth of a homogenous ideal. We provide an alternative proof for their result. Recall that for every monomial $u$ and for every variable $x_i$, the degree of $u$ with respect to $x_i$ is denoted by ${\rm deg}_{x_i}(u)$.

\begin{prop} [\cite{fhm}, Theorem 3.11] \label{depin}
Let $I$ be a proper homogenous ideal of $S$ and let $<$ be a monomial order. Assume that $B=\{b_{i,j} \mid 1\leq i\leq q, 0\leq j\leq t_i\}$ is a subset of distinct variables of $S$, such that the following conditions are satisfied.
\begin{itemize}
\item[(i)] For every pair of integers $1\leq i\leq q$, $1\leq j\leq t_i$ and for every $u\in G({\rm in}_<(I))$, we have ${\rm deg}_{b_{i,j}}(u)\leq 1$.
\item[(ii)] For $i=1, 2, \ldots, q$, if a monomial $u\in G({\rm in}_<(I))$ is divisible by $b_{i,0}$, then it is also divisible by $b_{i,j}$, for some integer $1\leq j\leq t_i$.
\end{itemize}
Then $\depth S/I\geq q$.
\end{prop}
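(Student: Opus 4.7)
The plan is to first reduce the statement to the monomial case via Gröbner degeneration, and then to prove the monomial version by induction using a short exact sequence. Step one is the standard inequality $\depth S/I \ge \depth S/\mathrm{in}_<(I)$, which follows from the existence of a flat Gröbner degeneration from $S/I$ to $S/\mathrm{in}_<(I)$ combined with the upper-semicontinuity of depth in flat families; this reduces the task to showing $\depth S/J \ge q$, where $J := \mathrm{in}_<(I)$ is the monomial ideal satisfying conditions (i) and (ii).

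For the monomial statement I would induct on $N := \sum_{i=1}^{q} t_i$. When $N = 0$, condition (ii) is vacuous and forces every minimal generator of $J$ to avoid all the variables $b_{i,0}$, so $b_{1,0}, \ldots, b_{q,0}$ is a regular sequence on $S/J$ and the inequality holds. For $N \ge 1$, after relabelling assume $t_q \ge 1$ and consider the short exact sequence
$$0 \longrightarrow S/(J : b_{q,1}) \xrightarrow{\cdot\, b_{q,1}} S/J \longrightarrow S/(J, b_{q,1}) \longrightarrow 0.$$
By the depth lemma, $\depth S/J \ge \min\bigl(\depth S/(J : b_{q,1}),\, \depth S/(J, b_{q,1})\bigr)$, so it suffices to bound each outer term below by $q$. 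For the quotient term, reducing modulo $b_{q,1}$ yields an ideal in $S/(b_{q,1})$ minimally generated by those $u \in G(J)$ with $b_{q,1} \nmid u$; relabelling the $q$th group as $b'_{q,0} := b_{q,0}$ and $b'_{q,j} := b_{q,j+1}$ for $1 \le j \le t_q - 1$, this ideal satisfies (i) and (ii) with parameters $q$ and $(t_1, \ldots, t_{q-1}, t_q - 1)$, since any surviving generator divisible by $b_{q,0}$ inherits from the original (ii) a divisor $b_{q,j}$ with $j \ne 1$. Induction on $N$ then gives $\depth S/(J, b_{q,1}) \ge q$.

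The main obstacle is the colon term $S/(J : b_{q,1})$. Condition (i) guarantees $\deg_{b_{q,1}}(u) \le 1$ for every $u \in G(J)$, so no generator of $J : b_{q,1}$ contains $b_{q,1}$, and therefore $b_{q,1}$ is a non-zero-divisor on $S/(J : b_{q,1})$; this reduces the task to proving $\depth\bigl(S/\bigl((J : b_{q,1}) + (b_{q,1})\bigr)\bigr) \ge q - 1$. The generators of $J : b_{q,1}$ are either $u$ with $b_{q,1} \nmid u$, or $u/b_{q,1}$ with $b_{q,1} \mid u$; the first type, together with the second type for which $u$ carries some $b_{q,k}$ with $k \ge 2$, satisfy the relabelled version of (ii). The exceptional generators are those $u/b_{q,1}$ arising from $u = b_{q,0}\, b_{q,1}\, w$ in which $w$ carries no further $b_{q,\ast}$ factor. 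My plan for handling these is to introduce a secondary short exact sequence inside $S/(b_{q,1})$ via multiplication by $b_{q,0}$: the further quotient by $b_{q,0}$ annihilates the exceptional generators and leaves an ideal satisfying (i) and (ii) for the $q - 1$ pairs obtained by dropping the $q$th group, while the colon-by-$b_{q,0}$ term can again be analysed inductively. Combined with the regularity of $b_{q,1}$, this two-step regular reduction should supply the missing $+1$ needed to reach depth $q$; verifying this compatibility between the primary and secondary short exact sequences is the principal technical step.
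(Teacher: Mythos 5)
Your overall skeleton matches the paper's proof: reduce to the monomial case via $\depth S/I\ge\depth S/\mathrm{in}_<(I)$, induct, handle the base case where all $t_i=0$ by observing the $b_{i,0}$ form a regular sequence, and split off one variable $b_{q,1}$ (the paper uses $b_{1,t_1}$; this is only a relabelling) by the short exact sequence relating $S/(J:b_{q,1})$, $S/J$ and $S/(J,b_{q,1})$. Your treatment of the quotient term $S/(J,b_{q,1})$ is correct and identical to the paper's.

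The gap is in the colon term, and you have correctly located it but not closed it. Your plan is to preserve all $q$ groups of variables for $(J:b_{q,1})$, which forces you to confront the ``exceptional'' generators $u/b_{q,1}$ that are divisible by $b_{q,0}$ but by no $b_{q,k}$ with $k\ge1$, and you propose a secondary exact sequence by multiplication by $b_{q,0}$ whose compatibility you explicitly leave unverified. As sketched this is incomplete: in particular, condition (i) places no constraint on $\deg_{b_{q,0}}$, so $b_{q,0}$ need not be regular on the colon-by-$b_{q,0}$ piece, and you do not say what hypotheses that piece satisfies or which induction applies to it. (It can be salvaged --- both pieces of the secondary sequence satisfy (i) and (ii) for the first $q-1$ groups, so induction gives $q-1$ for each --- but you have not said this.) The observation you are missing, and which the paper uses, is that the secondary sequence is unnecessary: since $b_{q,1}$ is regular on $S/(J:b_{q,1})$ by condition (i), you already gain $+1$ there, so for the reduced ideal $(J:b_{q,1})\cap S'$ you only need depth $\ge q-1$, which follows by applying the induction hypothesis after \emph{discarding the entire $q$-th group of variables}; conditions (i) and (ii) for the remaining $q-1$ groups are inherited trivially, and the exceptional generators cause no trouble because condition (ii) is no longer required for group $q$. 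With that one adjustment your argument becomes the paper's proof.
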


\begin{proof}
It is known that $\depth S/I\geq \depth S/{\rm in}_<(I)$ (see e.g., \cite[Theorem 3.3.4]{hh}). Hence, replacing $I$ by ${\rm in}_<(I)$, we may suppose that $I$ is a monomial ideal. We use induction on $|B|$. There is nothing to prove for $|B|=0$, as in this case $q=0$. Therefore, assume that $|B|\geq 1$. If $t_i=0$, for every $i=1, 2, \ldots, q$, then it follows from condition (ii) that $b_{1,0}, \ldots, b_{q,0}$ do not divide the minimal monomial generators of $I$. In particular, they form a regular sequences on $S/I$ and the assertion follows. Thus, suppose that $t_i\geq 1$, for some $i$ with $1\leq i\leq q$. Without lose of generality, suppose $i=1$. Consider the following short exact sequence.
\begin{align*}
0\longrightarrow S/(I:b_{1,t_1})\longrightarrow S/I\longrightarrow S/(I,b_{1,t_1})\longrightarrow 0
\end{align*}
This yields that
\[
\begin{array}{rl}
\depth S/I \geq \min \big\{\depth S/(I:b_{1,t_1}), \depth S/(I,b_{1,t_1})\big\}.
\end{array} \tag{1} \label{ast}
\]
By condition (i), the variable $b_{1,t_1}$ does not appear in the minimal monomial generators of $(I:b_{1,t_1})$. In particular, $b_{1,t_1}$ is a regular element on $S/(I:b_{1,t_1})$. Let $S'$ be the polynomial ring obtained from $S$ by deleting the variable $b_{1,t_1}$ (in other words, $S'\cong S/(b_{1,t_1})$). Set $I':=(I:b_{1,t_1})\cap S'$. It follows that$$\depth S/(I:b_{1,t_1})=\depth S/((I:b_{1,t_1}), b_{1,t_1})+1=\depth S'/I'+1.$$Clearly, $I'$ satisfies the assumptions with respect to the set $\{b_{i,j} \mid 2\leq i\leq q, 0\leq j\leq t_i\}$ of variables. Thus, the induction hypothesis implies that $\depth S'/I'\geq q-1$. Hence, we deduce from the above equalities that$$\depth S/(I:b_{1,t_1})\geq q.$$

Using inequality (\ref{ast}), it suffices to prove that $\depth S/(I,b_{1,t_1})\geq q$. Set $I'':=I\cap S'$. Then $S/(I,b_{1,t_1})\cong S'/I''$. Put $t_1':=t_1-1$ and $t_i':=t_i$, for $i=2, \ldots, q$. Obviously, $I''$ satisfies the assumptions with respect to the set $\{b_{i,j} \mid 1\leq i\leq q, 0\leq j\leq t_i'\}$ of variables. Therefore, we conclude from the induction hypothesis that$$\depth S/(I,b_{1,t_1})=\depth S'/I''\geq q.$$
\end{proof}

Let $G$ be a graph and $x$ be a vertex of $G$. The subgraph ${\rm St}(x)$ of $G$ with vertex set $N_G[x]$ and edge set $\{xy\, |\, y\in N_G(x)\}$ is called a {\it star with center $x$}. A {\it star packing} of $G$ is a family $S$ of stars in $G$ which are pairwise disjoint, i.e., $V({\rm St}(x))\cap V({\rm St}(x'))=\emptyset$, for ${\rm St}(x), {\rm St}(x')\in S$. The quantity$$\max\big\{|S|\, |\, S \ {\rm is \ a \ star \ packing \ of} \ G\big\}$$ is called the {\it star packing number} of $G$. Following \cite{fhm1}, we denote the star packing number of $G$ by $\alpha_2(G)$.

The following corollary is an immediate consequence of Proposition \ref{depin}, and it was indeed observed in \cite{fhm1}.

\begin{cor} [\cite{fhm1}] \label{spn}
For every graph $G$, we have$$\depth S/I(G)\geq \alpha_2(G).$$
\end{cor}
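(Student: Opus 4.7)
The plan is to apply Proposition \ref{depin} directly, with the set $B$ of variables read off from a maximum star packing of $G$. Fix such a packing $\{{\rm St}(y_1),\ldots,{\rm St}(y_q)\}$ of $G$ with $q=\alpha_2(G)$. For each $i\in\{1,\ldots,q\}$, I would set $b_{i,0}:=y_i$ (the center of the $i$-th star) and let $b_{i,1},\ldots,b_{i,t_i}$ be an enumeration of $N_G(y_i)$, so that $t_i={\rm deg}_G(y_i)$. Because the stars are pairwise vertex-disjoint by definition, the variables $b_{i,j}$ collected over all $(i,j)$ are pairwise distinct, which is required by Proposition \ref{depin}.

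Next I would observe that the edge ideal $I(G)$ is already a squarefree monomial ideal, so for any monomial order $<$ we have ${\rm in}_<(I(G))=I(G)$, with minimal generators exactly the edge monomials $x_kx_l$ for $x_kx_l\in E(G)$. Condition (i) of Proposition \ref{depin} is then immediate, since every minimal generator is squarefree and hence has degree at most one in every variable (in particular in each $b_{i,j}$).

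For condition (ii), suppose a minimal generator $u=x_kx_l$ is divisible by $b_{i,0}=y_i$. Then (say) $x_k=y_i$, which forces $x_l\in N_G(y_i)=\{b_{i,1},\ldots,b_{i,t_i}\}$, so $u$ is divisible by $b_{i,j}$ for some $1\leq j\leq t_i$, as required. With both conditions verified, Proposition \ref{depin} yields $\depth S/I(G)\geq q=\alpha_2(G)$.

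There is essentially no hard step here; the real content is the matching between the combinatorics of a star packing (a center together with all of its neighbors, packed disjointly) and the combinatorial hypothesis of Proposition \ref{depin} (groups $\{b_{i,0},b_{i,1},\ldots,b_{i,t_i}\}$ of distinct variables such that every generator hitting $b_{i,0}$ also hits one of the $b_{i,j}$). The only thing to be slightly careful about is distinctness of the $b_{i,j}$, but this is precisely what pairwise disjointness of the stars in the packing provides.
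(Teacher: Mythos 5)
Your proposal is correct and follows exactly the paper's argument: take the centers of a maximum star packing as the $b_{i,0}$ and their neighborhoods as the $b_{i,j}$, then verify the two hypotheses of Proposition \ref{depin}. The paper's proof is just a terser version of yours; your added details (distinctness via disjointness of the stars, condition (i) via squarefreeness) are the right justifications.
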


\begin{proof}
Let $b_{1,0}, \ldots, b_{q,0}$ be the centers of stars in a largest star packing of $G$. Moreover, for $1\leq i\leq q$, assume that $N_G(b_{i,0})=\{b_{i,1}, \ldots, b_{i,t_i}\}$. Then the assumptions of Proposition \ref{depin} are satisfied and it follows that$$\depth S/I(G)\geq q=\alpha_2(G).$$
\end{proof}


\section{Symbolic powers of edge ideals of chordal graphs} \label{sec3}

In this section, we prove the first main result of this paper, Theorem \ref{main} which states that inequality (\ref{ddag}) is true for every chordal graph $G$ and for any integer $s\geq 1$. In order to prove this result, we first need to estimate the star packing number of the graph obtained from $G$ by deleting a certain subset of its vertices. This will be done in the following two lemmas.

\begin{lem} \label{packdel1}
Let $G$ be a graph and let $W$ be a subset of $V(G)$. Then for every $A\subseteq \bigcup_{x\in W}N_G[x]$, we have$$\alpha_2(G\setminus A)\geq \alpha_2(G)-|W|.$$
\end{lem}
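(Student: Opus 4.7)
The plan is to take a maximum star packing of $G$ and argue that most of its stars survive the deletion of $A$, giving a star packing of $G\setminus A$.

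First I would fix a maximum star packing $\{{\rm St}(c_1),\ldots,{\rm St}(c_q)\}$ of $G$, where $q=\alpha_2(G)$. A star ${\rm St}(c_i)$ descends to a star in $G\setminus A$ (with center $c_i$ and vertex set $N_G[c_i]\setminus A$) precisely when $c_i\notin A$, and any two such surviving stars remain pairwise disjoint, since $V({\rm St}(c_i))\cap V({\rm St}(c_j))=\emptyset$ already in $G$ and deletion only shrinks these sets. So it is enough to bound the number of centers $c_i$ that lie in $A$.

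The key observation is that for each vertex $x\in W$, at most one center $c_i$ satisfies $c_i\in N_G[x]$. Indeed, if $c_i,c_j\in N_G[x]$ with $i\neq j$, then by symmetry of adjacency $x\in N_G[c_i]\cap N_G[c_j]=V({\rm St}(c_i))\cap V({\rm St}(c_j))$, contradicting the disjointness of the packing. Since $A\subseteq \bigcup_{x\in W}N_G[x]$, this yields $|\{i:c_i\in A\}|\leq |W|$, so at least $q-|W|$ centers survive. The corresponding stars in $G\setminus A$ form a star packing, and we conclude $\alpha_2(G\setminus A)\geq q-|W|=\alpha_2(G)-|W|$.

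There is no real obstacle here: once the correct definitions are written down, the argument is essentially forced by the disjointness condition on star packings. The only minor point to check is the degenerate case in which a surviving center $c_i$ happens to lose all of its neighbors to $A$; in that case the star in $G\setminus A$ centered at $c_i$ is the single vertex $\{c_i\}$, which is still a trivial star under the paper's convention and so is a legitimate member of a star packing.
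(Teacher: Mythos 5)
Your proposal is correct and follows essentially the same argument as the paper: both take the centers of a maximum star packing, bound the number of centers falling in $A$ by $|W|$ via the disjointness of closed neighborhoods, and observe that the remaining stars restrict to a star packing of $G\setminus A$. Your write-up merely makes explicit the step (each $N_G[x]$ with $x\in W$ contains at most one center) that the paper leaves implicit.
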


\begin{proof}
Let $\mathcal{S}$ be the set of the centers of stars in a largest star packing of $G$. In particular, $|\mathcal{S}|=\alpha_2(G)$. Since every vertex in $A$ belongs to the closed neighborhood of a vertex in $W$, it follows from the definition of star packing that $|\mathcal{S}\cap A|\leq | W|$. Then the stars in $G\setminus A$ centered at the vertices in $\mathcal{S}\setminus A$ form a star packing in $G\setminus A$ of size at least $\alpha_2(G)-| W|$. Therefore, $\alpha_2(G\setminus A)\geq \alpha_2(G)-|W|$.
\end{proof}

\begin{lem} \label{packdel}
Assume that $G$ is a graph and $W=\{x_1, \ldots, x_d\}$ is a clique of $G$. Let $A$ be a subset of $V(G)$ such that

\begin{itemize}
\item [(i)] $A\subseteq \bigcup_{i=1}^dN_G(x_i)$,
\item [(ii)] $N_G(x_1)\setminus \{x_2, \ldots, x_d\} \subseteq A$, and
\item [(iii)] $x_1\notin A$.
\end{itemize}
Then $\alpha_2(G\setminus A)\geq \alpha_2(G)-d+1$.
\end{lem}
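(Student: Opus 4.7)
The plan is to start with a largest star packing $\mathcal{S}$ of $G$, with $|\mathcal{S}|=\alpha_2(G)$, and to modify it into a star packing of $G\setminus A$ of size at least $\alpha_2(G)-d+1$. The first preliminary observation is that for each $i\in\{1,\dots,d\}$, the closed neighborhood $N_G[x_i]$ contains at most one center of $\mathcal{S}$: if $c,c'$ were two distinct centers in $N_G[x_i]$, then $x_i\in N_G[c]\cap N_G[c']$, contradicting the disjointness of the stars in $\mathcal{S}$. Since condition~(i) gives $A\subseteq\bigcup_{i=1}^{d}N_G(x_i)\subseteq\bigcup_{i=1}^{d}N_G[x_i]$, this yields $|\mathcal{S}\cap A|\leq d$.

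If $|\mathcal{S}\cap A|\leq d-1$, then the stars of $\mathcal{S}$ centered outside $A$ still form a star packing of $G\setminus A$ (their vertex sets in $G\setminus A$ are contained in the original pairwise disjoint closed neighborhoods), and this packing already has size at least $\alpha_2(G)-d+1$.

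The main case, and the only real obstacle, is $|\mathcal{S}\cap A|=d$. By the pigeonhole argument above, exactly one center of $\mathcal{S}\cap A$ lies in each $N_G(x_i)$, so after relabeling we may assume $c_i\in\mathcal{S}\cap A\cap N_G(x_i)$ for $i=1,\dots,d$. The plan is to remove $c_1,\dots,c_d$ and put $x_1$ back in as a new center. Condition~(iii) says $x_1\notin A$, so $x_1\in V(G\setminus A)$. Using condition~(ii), every neighbor of $x_1$ that survives in $G\setminus A$ lies in $\{x_2,\dots,x_d\}$, so the star at $x_1$ in $G\setminus A$ has vertex set contained in $W$.

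It remains to verify that this star at $x_1$ is disjoint from the stars at the other remaining centers $c_{d+1},\dots,c_q$; since these remaining stars in $G\setminus A$ are contained in their original closed neighborhoods, it suffices to show $N_G[c_j]\cap W=\emptyset$ for every $j\geq d+1$. If some $x_i\in N_G[c_j]$, then either $x_i=c_j$, which together with $c_i\in N_G(x_i)=N_G(c_j)$ would give $c_j\in N_G[c_i]$, or $c_j\in N_G(x_i)$, which together with $c_i\in N_G(x_i)$ would give $x_i\in N_G[c_i]\cap N_G[c_j]$; both cases contradict the disjointness of the original star packing $\mathcal{S}$. Thus $\{x_1,c_{d+1},\dots,c_q\}$ is a valid set of centers for a star packing of $G\setminus A$ of size $q-d+1=\alpha_2(G)-d+1$, completing the proof.
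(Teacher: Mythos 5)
Your proof is correct and follows essentially the same route as the paper: bound $|\mathcal{S}\cap A|\leq d$ via the disjointness of the stars, split into the cases $|\mathcal{S}\cap A|\leq d-1$ and $|\mathcal{S}\cap A|=d$, and in the latter case trade the $d$ deleted centers for a new star centered at $x_1$, whose vertex set lies in $W$ by condition (ii). Your verification that the new star is disjoint from the surviving ones (including the case $x_i=c_j$) is in fact slightly more explicit than the paper's.
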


\begin{proof}
Let $\mathcal{S}$ be the set of the centers of stars in a largest star packing of $G$. Similar to the proof of the Lemma \ref{packdel1}, we have $|\mathcal{S}\cap A|\leq d$. If $|\mathcal{S}\cap A|\leq d-1$, then the stars in $G\setminus A$ centered at the vertices in $\mathcal{S}\setminus A$ form a star packing in $G\setminus A$ of size at least $\alpha_2(G)-d+1$. Thus, the assertion follows in this case. Therefore, suppose $|\mathcal{S}\cap A|=d$. In this case, we have$$x_1, \ldots, x_d\in \bigcup_{x\in \mathcal{S}\cap A}N_G(x).$$It again follows from the definition of star packing that$$x_1, \ldots, x_d\notin \bigcup_{x\in \mathcal{S}\setminus A}N_{G\setminus A}(x).$$Therefore, we conclude from condition (ii) that
\begin{align*}
N_{G\setminus A}[x_1]\cap \big(\bigcup_{x\in \mathcal{S}\setminus A}N_{G\setminus A}(x)\big)\subseteq \{x_1, \ldots, x_d\}\cap \big(\bigcup_{x\in \mathcal{S}\setminus A}N_{G\setminus A}(x)\big)=\emptyset.
\end{align*}
As a consequence, the stars in $G\setminus A$ centered at the vertices in $(\mathcal{S}\setminus A)\cup\{x_1\}$ form a star packing in $G\setminus A$ of size $\alpha_2(G)-d+1$. This completes the proof of the lemma.
\end{proof}

We are now ready to prove that inequality (\ref{ddag}) holds for any chordal graph. Indeed, we are able to prove the following stronger result.

\begin{prop} \label{sum1}
Let $G$ be a chordal graph. Suppose $H$ and $H'$ are subgraphs of $G$ with$$E(H)\cap E(H')=\emptyset \ \ \ and \ \ \ E(H)\cup E(H')=E(G).$$Assume further that $H$ is a chordal graph. Then for every integer $s\geq 1$,$$\depth S/(I(H)^{(s)}+I(H'))\geq \alpha_2(G)-s+1.$$
\end{prop}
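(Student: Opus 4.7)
\textbf{Proof plan}: The plan is induction on $s$, with a secondary induction on $|V(G)|$ to support the graph reductions. The \emph{base case} $s = 1$ is immediate: since $E(H)$ and $E(H')$ partition $E(G)$, we have $I(H)^{(1)} + I(H') = I(G)$, and Corollary \ref{spn} yields $\depth S/I(G) \geq \alpha_2(G)$.

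For the \emph{inductive step} ($s \geq 2$), I would pick a simplicial vertex $x_1$ of the chordal graph $H$ with $N_H(x_1) = \{x_2, \ldots, x_d\}$ (a clique in $H$, and hence in $G$). Setting $J = I(H)^{(s)} + I(H')$, the short exact sequence
$$0 \to S/(J : x_1) \to S/J \to S/(J, x_1) \to 0$$
reduces the task to bounding the depths of the two outer terms. For $S/(J, x_1)$, I would first verify the primary-decomposition identity $(I(H)^{(s)}, x_1) = (I(H \setminus x_1)^{(s)}, x_1)$, which follows from the bijection between minimal vertex covers of $H \setminus x_1$ and minimal vertex covers of $H$ with $x_1$ removed where it appears. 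This gives an isomorphism $S/(J, x_1) \cong S'/(I(H \setminus x_1)^{(s)} + I(H' \setminus x_1))$ (with $S' = S/(x_1)$), and the inductive hypothesis on $|V(G)|$ applied to the chordal graph $G \setminus x_1$ with decomposition $(H \setminus x_1, H' \setminus x_1)$ handles this term. For $S/(J : x_1)$, the calculation $\mathfrak{p}_C^s : x_1 = \mathfrak{p}_C^{s-1}$ (when $x_1 \in C$) or $\mathfrak{p}_C^s$ (otherwise) gives $I(H)^{(s)} : x_1 = I(H)^{(s-1)} \cap \bigcap_{C \not\ni x_1} \mathfrak{p}_C^s$; since $x_1$ is simplicial, any cover $C \not\ni x_1$ must contain $N_H(x_1)$, and one reshapes the colon into the form $I(\tilde H)^{(s-1)} + I(\tilde H')$ for a chordal $\tilde H$ and suitable $\tilde H'$, after which the inductive hypothesis on $s$ applies.

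The \emph{main obstacle} is an apparent off-by-one in the analysis of $S/(J, x_1)$: the inductive hypothesis delivers only $\alpha_2(G \setminus x_1) - s + 1$, which falls short of the target $\alpha_2(G) - s + 1$ precisely when $x_1$ is a center of every maximum star packing of $G$. Lemma \ref{packdel} is tailored to address this: by iteratively peeling off the external neighbors $N_{H'}(x_1) = N_G(x_1) \setminus \{x_2, \ldots, x_d\}$ of $x_1$ via additional short exact sequences beforehand, any deletions stay within the closed neighborhood of the clique $W = \{x_1, \ldots, x_d\}$, so the cumulative loss in $\alpha_2$ is capped at $d - 1$. Harmonizing this combinatorial slack with the extra unit of depth gained from the colon side (via $s \to s-1$) is the delicate balancing act I expect to be the hardest part of the proof.
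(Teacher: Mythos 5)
Your base case, your choice of a simplicial vertex $x_1$ of $H$, and your instinct to invoke Lemma \ref{packdel} are all in line with the paper, but there is a genuine gap at the heart of the inductive step: you colon by the single variable $x_1$, and the term $S/(J:x_1)$ cannot be put back into the inductive shape. Indeed, as you compute, $I(H)^{(s)}:x_1=\bigcap_{C\ni x_1}\mathfrak{p}_C^{s-1}\cap\bigcap_{C\not\ni x_1}\mathfrak{p}_C^{s}$ is a mixed intersection of different powers, and the ``reshaping into $I(\tilde H)^{(s-1)}+I(\tilde H')$'' that you defer is not a technicality but the point where the argument breaks. For the triangle on $x_1,x_2,x_3$ one gets $I^{(2)}:x_1=(x_1x_2^2,\,x_1x_3^2,\,x_2x_3)$, which is not a sum of a symbolic power of an edge ideal and an edge ideal, so no choice of $\tilde H,\tilde H'$ works. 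Moreover the bookkeeping does not balance: coloning by $x_1$ lowers the symbolic exponent by only $1$, while Lemma \ref{packdel} only caps the loss of $\alpha_2$ at $d-1$ when you delete the neighborhood of the whole clique $\{x_1,\dots,x_d\}$; these match only when $d=2$. The same objection applies to your plan of ``peeling off'' the vertices of $N_{H'}(x_1)$ one at a time beforehand, since each such single-variable colon again leaves the class of ideals under consideration.

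The paper resolves exactly this difficulty by coloning by the \emph{product} $x_1x_2\cdots x_d$ over the closed neighborhood of the simplicial vertex: by \cite[Lemma 2]{s9}, $\big(I(H)^{(s)}+I(H')\big):x_1\cdots x_d=I(H\setminus U')^{(s-d+1)}+I(H'\setminus U)+(\text{variables in } U')$, so the exponent drops by $d-1$, precisely offsetting the loss of $d-1$ in $\alpha_2$ guaranteed by Lemma \ref{packdel}. The price is that the quotient term is now $S/(I,x_1\cdots x_d)$ rather than $S/(I,x_1)$, and the paper handles it with a staircase of ideals $x_1J_{d-1}\supset\cdots\supset x_1J_1$ (squarefree monomials on $x_2,\dots,x_d$ multiplied by $x_1$), descending via further colon computations of the same type until it reaches $I(L)^{(s)}+I(L')$, where $L$ is $H$ with the edges at $x_1$ removed and $L'$ is $H'$ with them added; this is also why the induction runs on $s+|E(H)|$ rather than on $|V(G)|$, since $L$ has the same vertex set as $H$ but fewer edges. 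Without the clique-product colon (or some substitute for \cite[Lemma 2]{s9}), your outline cannot be completed as written.
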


\begin{proof}
As the isolated vertices have no effect on edge ideals, we assume that $V(H)=V(H')=V(G)$ (i.e., we extend the vertex sets of $H$ and $H'$ to $V(G)$). We use induction on $s+|E(H)|$. For $s=1$, we have $I(H)^{(s)}+I(H')=I(G)$ and the assertion follows from Corollary \ref{spn}. Therefore, suppose $s\geq 2$. If $E(H)=\emptyset$, then $I(H')=I(G)$ and again we have the required inequality by Corollary \ref{spn}. Hence, we assume $|E(H)|\geq 1$.

To simplify the notations, we set $I:=I(H)^{(s)}+I(H')$. Since $H$ is a chordal graph, it has a simplicial vertex, say $x_1$, with nonzero degree. Without loss of generality, suppose $N_H(x_1)=\big\{x_2, \ldots, x_d\big\}$, for some integer $d\geq 2$. Consider the following short exact sequence.
\begin{align*}
0\longrightarrow \frac{S}{(I:x_1\ldots x_d)}\longrightarrow \frac{S}{I}\longrightarrow \frac{S}{(I,x_1\ldots x_d)}\longrightarrow 0
\end{align*}
Using depth Lemma \cite[Proposition 1.2.9]{bh}, we have
\[
\begin{array}{rl}
\depth S/I \geq \min \big\{\depth S/(I:x_1\ldots x_d), \depth S/(I,x_1\ldots x_d)\big\}.
\end{array} \tag{2} \label{1}
\]

By assumption, for every pair of integers $i\neq j$, with $1\leq i,j\leq d$ we have $x_ix_j\in E(H)$. Therefore, $x_ix_j$ is not an edge of $H'$. Set$$U:=\bigcup_{i=1}^dN_{H'}[x_i]$$and$$U':=\bigcup_{i=1}^dN_{H'}(x_i).$$Then using \cite[Lemma 2]{s9}, we have
\begin{align*}
& (I:x_1\ldots x_d)=\big((I(H)^{(s)}+I(H')):x_1\ldots x_d\big)\\ & =I(H)^{(s-d+1)}+I(H'\setminus U) +\big({\rm the \ ideal\ generated\ by}\ U'\big)\\ & =I(H\setminus U')^{(s-d+1)}+I(H'\setminus U)+\big({\rm the \ ideal\ generated\ by}\ U'\big).
\end{align*}
This yields that$$\depth S/(I:x_1\ldots x_d)=\depth S'/(I(H\setminus U')^{(s-d+1)}+I(H'\setminus U)),$$where $S'=\mathbb{K}[x_i: 1\leq i\leq n, i\notin U']$. Let $G'$ be the union of $H\setminus U'$ and $H'\setminus U$. In fact, $G'$ is the induced subgraph of $G$ on $V(G)\setminus U'$. Clearly, $N_G(x_1)\setminus \{x_2, \ldots, x_d\}$ is contained in $U'$. Then the above equality together with Lemma \ref{packdel} and the induction hypothesis implies that
\[
\begin{array}{rl}
\depth S/(I:x_1\ldots x_d)\geq \alpha_2(G\setminus U')-(s-d+1)+1\geq \alpha_2(G)-s+1.
\end{array} \tag{3} \label{2}
\]

Using inequalities (\ref{1}) and (\ref{2}), it is enough to prove that$$\depth S/(I,x_1\ldots x_d)\geq \alpha_2(G)-s+1.$$For every integer $k$ with $1\leq k\leq d-1$, let $J_k$ be the ideal generated by all the squarefree monomials of degree $k$ on variables $x_2, \ldots, x_d$. We continue in the following steps.

\vspace{0.3cm}
{\bf Step 1.} Let $1\leq k\leq d-2$ be a fixed integer and assume that $\{u_1, \ldots, u_t\}$ is the set of minimal monomial generators of $x_1J_k$. In particular, every $u_j$ is divisible by $x_1$ and ${\rm deg}(u_j)=k+1$. For every integer $j$ with $1\leq j\leq t$, we prove that
\begin{align*}
& \depth S/(I+x_1J_{k+1}+(u_1, \ldots, u_{j-1}))\\ & \geq\min\big\{\depth S/(I+x_1J_{k+1}+(u_1, \ldots, u_j)), \alpha_2(G)-s+1\big\}.
\end{align*}
(Note that for $j=1$, we have $I+x_1J_{k+1}+(u_1, \ldots, u_{j-1})=I+x_1J_{k+1}$.)

Consider the following short exact sequence.
\begin{align*}
0 & \longrightarrow \frac{S}{(I+x_1J_{k+1}+(u_1, \ldots, u_{j-1})):u_j}\longrightarrow \frac{S}{I+x_1J_{k+1}+(u_1, \ldots, u_{j-1})}\\ & \longrightarrow \frac{S}{I+x_1J_{k+1}+(u_1, \ldots, u_j)}\longrightarrow 0
\end{align*}
As a consequence,
\begin{align*}
& \depth S/(I+x_1J_{k+1}+(u_1, \ldots, u_{j-1}))\geq\\& \min\big\{\depth S/((I+x_1J_{k+1}+(u_1, \ldots, u_{j-1})):u_j), \depth S/(I+x_1J_{k+1}+(u_1, \ldots, u_j))\big\}.
\end{align*}
Therefore, to complete this step, we need to show that$$\depth S/((I+x_1J_{k+1}+(u_1, \ldots, u_{j-1})):u_j)\geq \alpha_2(G)-s+1.$$

Set$$U_j:=\{x_i\mid 1\leq i \leq d \ {\rm and} \ x_i\ {\rm does\ not\ divide\ }u_j\}.$$For any $x_i\in U_j$, the monomial $x_iu_j$ is a squarefree monomial of degree $k+2$. Hence, $x_iu_j$ belongs to $x_1J_{k+1}$. This shows that$$({\rm the\ ideal\ generated\ by}\ U_j)\subseteq \big((x_1J_{k+1}+(u_1, \ldots, u_{j-1})):u_j\big).$$We show the reverse inclusion holds too.

Since $x_1J_{k+1}+(u_1, \ldots, u_{j-1})$ is a squarefree monomial ideal, it follows that$$\big((x_1J_{k+1}+(u_1, \ldots, u_{j-1})):u_j\big)$$is also a squarefree monomial ideal. On the other hand, $\{x_1, \ldots, x_d\}$ is the set of variables appearing in the set of minimal monomial generators of $x_1J_{k+1}+(u_1, \ldots, u_{j-1})$. This implies that every monomial generator of $\big((x_1J_{k+1}+(u_1, \ldots, u_{j-1})):u_j\big)$ is a squarefree monomial over the variables $x_1, \ldots, x_d$. Assume that $v$ is a minimal generator of $\big((x_1J_{k+1}+(u_1, \ldots, u_{j-1})):u_j\big)$. If $v$ is not equal to any of the variables belonging to $U_j$, then by definition of $U_j$, every variable dividing $v$, also divides $u_j$. As $v$ is a squarefree monomial, we have $v\mid u_j$. Since$$u_jv\in x_1J_{k+1}+(u_1, \ldots, u_{j-1}),$$we deduce that$$u_j^2\in x_1J_{k+1}+(u_1, \ldots, u_{j-1}),$$ which implies that$$u_j\in x_1J_{k+1}+(u_1, \ldots, u_{j-1}),$$because $x_1J_{k+1}+(u_1, \ldots, u_{j-1})$ is a squarefree monomial ideal. This is contradiction, as the degree of $u_j$ is strictly less that the degree of any monomial in $x_1J_{k+1}$ and moreover none of the monomials $u_1, \ldots, u_{j-1}$ is equal to $u_j$. Hence,
\[
\begin{array}{rl}
\big((x_1J_{k+1}+(u_1, \ldots, u_{j-1})):u_j\big)=({\rm the\ ideal\ generated\ by}\ U_j).
\end{array} \tag{4} \label{3}
\]

Let $W_j$ be the set of variables dividing $u_j$. In other words, $W_j=\{x_1, \ldots, x_d\}\setminus U_j$. We remind that for any pair of integers $1\leq i, j\leq d$, the vertices $x_i$ and $x_j$ are not adjacent in $H'$. Set$$U_j':=\bigcup_{x_i\in W_j}N_{H'}[x_i]$$and$$U_j'':=\bigcup_{x_i\in W_j}N_{H'}(x_i).$$Using equality (\ref{3}), we conclude that
\begin{align*}
& \big((I+x_1J_{k+1}+(u_1, \ldots, u_{j-1})):u_j\big)=\\ & \big((I(H)^{(s)}+I(H')+x_1J_{k+1}+(u_1, \ldots, u_{j-1})):u_j\big)=\\ & (I(H)^{(s)}:u_j)+I(H'\setminus U_j')+({\rm the\ ideal\ generated\ by}\ U_j\cup U_j'')\\ & =\big(I\big(H\setminus (U_j\cup U_j'')\big)^{(s)}:u_j\big)+I\big(H'\setminus (U_j\cup U_j')\big)\\ & +({\rm the\ ideal\ generated\ by}\ U_j\cup U_j'').
\end{align*}

Set $H_j:=H\setminus (U_j\cup U_j'')$ and $H_j':=H'\setminus (U_j\cup U_j')$. Then $H_j$ is a chordal graph, and $x_1$ is a simplicial vertex of $H_j$. It is also clear that $N_{H_j}[x_1]$ is the set of variables divining $u_j$. It thus follows from \cite[Lemma 2]{s9} and the above equalities that
\begin{align*}
& \big((I+x_1J_{k+1}+(u_1, \ldots, u_{j-1})):u_j\big)=I(H_j)^{(s-k)}+I(H_j')\\ & +({\rm the\ ideal\ generated\ by}\ U_j\cup U_j'').
\end{align*}
This yields that$$\depth S/((I+x_1J_{k+1}+(u_1, \ldots, u_{j-1})):u_j)=\depth S_j/(I(H_j)^{(s-k)}+I(H_j')),$$where $S_j=\mathbb{K}[x_i: 1\leq i\leq n, i\notin U_j\cup U_j'']$. Let $G_j$ be the union of $H_j$ and $H_j'$. Then $G_j$ is the induced subgraph of $G$ on $V(G)\setminus (U_j\cup U_j'')$. We conclude from Lemma \ref{packdel} (by considering the clique $W_j$) that$$\alpha_2(G_j)\geq \alpha_2(G)-| W_j| +1=\alpha_2(G)-k,$$where the last equality follows from the fact that ${\rm deg}(u_j)=k+1$. Hence, the induction hypothesis implies that
\begin{align*}
& \depth S/((I+x_1J_{k+1}+(u_1, \ldots, u_{j-1})):u_j)=\depth S_j/(I(H_j)^{(s-k)}+I(H_j'))\\ & \geq \alpha_2(G_j)-(s-k)+1\geq \alpha_2(G)-s+1,
\end{align*}
and this step is complete.

\vspace{0.3cm}
{\bf Step 2.} Let $1\leq k\leq d-2$ be a fixed integer. By a repeated use of Step 1, we have
\begin{align*}
& \depth S/(I+x_1J_{k+1}) \geq\min\big\{\depth S/(I+x_1J_{k+1}+(u_1, \ldots, u_t)), \alpha_2(G)-s+1\big\}\\ & =\min\big\{\depth S/(I+x_1J_k), \alpha_2(G)-s+1\big\}.
\end{align*}

\vspace{0.3cm}
{\bf Step 3.} It follows from Step 2 that
\begin{align*}
& \depth S/(I,x_1\ldots x_d)=\depth S/(I+x_1J_{d-1})\\ & \geq\min\big\{\depth S/(I+x_1J_{d-2}), \alpha_2(G)-s+1\big\}\\ & \geq\min\big\{\depth S/(I+x_1J_{d-3}), \alpha_2(G)-s+1\big\}\\ & \geq \cdots \geq\min\big\{\depth S/(I+x_1J_1), \alpha_2(G)-s+1\big\}.
\end{align*}
In particular,
\[
\begin{array}{rl}
\depth S/(I,x_1\ldots x_d)\geq\min\big\{\depth S/\big(I+(x_1x_2, x_1x_3, \ldots, x_1x_d)\big), \alpha_2(G)-s+1\big\}.
\end{array} \tag{5} \label{4}
\]

\vspace{0.3cm}
{\bf Step 4.} Let $L$ be the graph obtained from $H$, by deleting the edges $x_1x_2, \ldots, x_1x_d$. Then $L$ is the disjoint union of $H\setminus x_1$ and the isolated vertex $x_1$. In particular, $L$ is a chordal graph. Also, let $L'$ be the graph obtained from $H'$, by adding the edges $x_1x_2, \ldots, x_1x_d$. Then$$E(L)\cap E(L')=\emptyset \ \ \ and \ \ \ E(L)\cup E(L')=E(G).$$It follows from \cite[Lemma 3.2]{s8} and the induction hypothesis that
\begin{align*}
& \depth S/(I+(x_1x_2, x_1x_3, \ldots, x_1x_d))=\\ & \depth S/(I(H)^{(s)}+I(H')+(x_1x_2, x_1x_3, \ldots, x_1x_d)\big)=\\ & \depth S/((I(L)^{(s)}+I(L')))\geq \alpha_2(G)-s+1.
\end{align*}
Finally, inequality (\ref{4}) implies that
\[
\begin{array}{rl}
\depth S/(I,x_1\ldots x_d)\geq \alpha_2(G)-s+1.
\end{array} \tag{6} \label{5}
\]
Now, inequalities (\ref{1}), (\ref{2}) and (\ref{5}) complete the proof of the proposition.
\end{proof}

The following theorem is the main result of this section and follows easily from Proposition \ref{sum1}.

\begin{thm} \label{main}
Let $G$ be a chordal graph. Then for every integer $s\geq 1$, we have$$\depth S/(I(G)^{(s)})\geq\alpha_2(G)-s+1.$$
\end{thm}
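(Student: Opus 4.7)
The plan is to deduce Theorem \ref{main} as an immediate specialization of Proposition \ref{sum1}. The proposition has been deliberately stated in a generalized form that splits the edges of $G$ into a chordal piece $H$ (contributing to the symbolic power) and an arbitrary piece $H'$ (contributing only as a squarefree edge ideal), precisely so that both the main theorem and the inductive machinery needed in its proof can be handled uniformly. Once Proposition \ref{sum1} is granted, recovering Theorem \ref{main} amounts to picking the extremal decomposition where all edges are placed in the chordal component.

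Concretely, given a chordal graph $G$, I would set $H := G$ and let $H'$ be the edgeless graph on the vertex set $V(G)$. Then $E(H) = E(G)$, $E(H') = \emptyset$, and so the hypotheses of Proposition \ref{sum1} — namely $E(H)\cap E(H') = \emptyset$, $E(H)\cup E(H') = E(G)$, and $H$ chordal — are all trivially satisfied. Moreover, since $H'$ has no edges, $I(H') = (0)$ and therefore
\[
I(H)^{(s)} + I(H') \;=\; I(G)^{(s)}.
\]
Applying Proposition \ref{sum1} directly gives
\[
\depth S/I(G)^{(s)} \;=\; \depth S/\bigl(I(H)^{(s)} + I(H')\bigr) \;\ge\; \alpha_2(G) - s + 1,
\]
which is exactly the desired inequality.

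There is essentially no obstacle at this stage — all of the genuine work (the induction on $s + |E(H)|$, the simplicial vertex reduction, the fine analysis via the colon ideals $(I+x_1J_{k+1}+(u_1,\dots,u_{j-1})):u_j$, and the repeated invocation of Lemma \ref{packdel} to control $\alpha_2$ after vertex deletions) has already been absorbed into the proof of the proposition. The only point worth noting is that the edgeless graph on $V(G)$ is admissible as the $H'$ component, which is clear from the definitions. Thus Theorem \ref{main} follows in a single line from Proposition \ref{sum1}.
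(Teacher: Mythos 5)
Your proposal is correct and coincides with the paper's own proof, which likewise derives Theorem \ref{main} from Proposition \ref{sum1} by taking $H=G$ and $H'$ edgeless, so that $I(H)^{(s)}+I(H')=I(G)^{(s)}$. No issues.
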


\begin{proof}
The assertion follows from Proposition \ref{sum1} by substituting $H=G$ and $H'=\emptyset$.
\end{proof}


\section{Second symbolic power of edge ideals} \label{sec4}

The aim of this section is to show that inequality (\ref{ddag}) is true for $s=2$, Theorem \ref{2power}. To prove this result, we need to bound the depth of ideals of the form $\big(I(G)^{(k)}:xy\big)$, where $xy$ is an edge of $G$. To achieve this goal, we will use the following lemma in the case of $k=2$.

\begin{lem} \label{intsec}
Let $G$ be a graph and $xy$ be an edge of $G$. Then for any integer $k\geq 2$, we have$$\big(I(G)^{(k)}:xy\big)=\big(I(G)^{(k-1)}:x\big)\cap \big(I(G)^{(k-1)}:y\big).$$
\end{lem}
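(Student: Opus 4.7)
The plan is to reduce the identity to a statement about each individual minimal prime in the primary decomposition
$$I(G)^{(k)} = \bigcap_{C \in \mathcal{C}(G)} \mathfrak{p}_C^{k},$$
and then do a small case analysis at each such prime. Since colon commutes with intersection, i.e., $\bigl(\bigcap_i J_i : u\bigr) = \bigcap_i (J_i : u)$, both sides rewrite as intersections indexed by $\mathcal{C}(G)$:
\begin{align*}
\bigl(I(G)^{(k)} : xy\bigr) &= \bigcap_{C \in \mathcal{C}(G)} \bigl(\mathfrak{p}_C^{k} : xy\bigr), \\
\bigl(I(G)^{(k-1)} : x\bigr) \cap \bigl(I(G)^{(k-1)} : y\bigr) &= \bigcap_{C \in \mathcal{C}(G)} \Bigl[\bigl(\mathfrak{p}_C^{k-1} : x\bigr) \cap \bigl(\mathfrak{p}_C^{k-1} : y\bigr)\Bigr].
\end{align*}
So it suffices to check, for each fixed $C \in \mathcal{C}(G)$, the equality
$$\bigl(\mathfrak{p}_C^{k} : xy\bigr) = \bigl(\mathfrak{p}_C^{k-1} : x\bigr) \cap \bigl(\mathfrak{p}_C^{k-1} : y\bigr). \qquad (\star)$$

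Now I would carry out the case analysis. Since $C$ is a vertex cover and $xy \in E(G)$, at least one of $x,y$ lies in $C$. The main computational fact I would use is the elementary identity $(\mathfrak{p}_C^{\ell} : z) = \mathfrak{p}_C^{\ell-1}$ when $z$ is a variable in $C$ (with the convention $\mathfrak{p}_C^{0}=S$), while $(\mathfrak{p}_C^{\ell} : z) = \mathfrak{p}_C^{\ell}$ when $z$ is a variable not in $C$, because $z$ is then a nonzerodivisor modulo $\mathfrak{p}_C^{\ell}$. Splitting into the three possibilities for $C \cap \{x,y\}$:
\begin{itemize}
\item If $x,y \in C$: the left side of $(\star)$ is $(\mathfrak{p}_C^{k} : xy) = \mathfrak{p}_C^{k-2}$, and the right side is $\mathfrak{p}_C^{k-2} \cap \mathfrak{p}_C^{k-2} = \mathfrak{p}_C^{k-2}$.
\item If $x \in C$ and $y \notin C$: the left side is $((\mathfrak{p}_C^{k} : x) : y) = (\mathfrak{p}_C^{k-1} : y) = \mathfrak{p}_C^{k-1}$, and the right side is $\mathfrak{p}_C^{k-2} \cap \mathfrak{p}_C^{k-1} = \mathfrak{p}_C^{k-1}$.
\item If $y \in C$ and $x \notin C$: symmetric to the previous case.
\end{itemize}
Both sides agree in every case, so $(\star)$ holds and the lemma follows by intersecting over all $C$.

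The proof is essentially a bookkeeping argument, so there is no major obstacle; the only thing that needs justification is the fact that colon commutes with intersection (standard) and the clean behaviour of the colon ideals of powers of a monomial prime ideal with respect to a single variable, which in turn reduces to the fact that any variable outside $C$ is a nonzerodivisor modulo every power of $\mathfrak{p}_C$. I would state the latter explicitly before the case analysis so that each of the three lines above is immediate.
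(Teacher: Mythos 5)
Your proof is correct and rests on the same two ingredients as the paper's own argument: the decomposition $I(G)^{(k)}=\bigcap_{C\in\mathcal{C}(G)}\mathfrak{p}_C^{k}$ and the fact that every minimal vertex cover contains $x$ or $y$ because $xy$ is an edge. The only difference is organizational: you establish the stronger termwise equality $(\mathfrak{p}_C^{k}:xy)=(\mathfrak{p}_C^{k-1}:x)\cap(\mathfrak{p}_C^{k-1}:y)$ for each cover $C$, whereas the paper checks the two inclusions of the global ideals directly; both come down to the same elementary computation of colons of powers of $\mathfrak{p}_C$ by a single variable.
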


\begin{proof}
Let $u$ be a monomial in $\big(I(G)^{(k)}:xy\big)$. Then $uxy\in I(G)^{(k)}$. Clearly, this implies that $ux\in I(G)^{(k-1)}$. Therefore, $u\in \big(I(G)^{(k-1)}:x\big)$. Similarly, $u$ belongs to $\big(I(G)^{(k-1)}:y\big)$. Hence,$$\big(I(G)^{(k)}:xy\big)\subseteq\big(I(G)^{(k-1)}:x\big)\cap \big(I(G)^{(k-1)}:y\big).$$

To prove the reverse inclusion, let $v$ be a monomial in$$\big(I(G)^{(k-1)}:x\big)\cap \big(I(G)^{(k-1)}:y\big).$$We must show that $vxy\in I(G)^{(k)}$. It is enough to prove that for any minimal vertex cover $C$ of $G$, we have $vxy\in \mathfrak{p}_C^k$. So, let $C$ be a minimal vertex cover of $G$. It follows from $xy\in E(G)$ that $C$ contains at least one of the vertices $x$ and $y$. Without lose of generality, suppose $x\in C$. Since $v\in \big(I(G)^{(k-1)}:y\big)$, we have $vy\in I(G)^{(k-1)}\subseteq \mathfrak{p}_C^{k-1}$. This together with $x\in \mathfrak{p}_C$ implies that $vxy\in \mathfrak{p}_C^k$.
\end{proof}

The following theorem is the second main result of this paper.

\begin{thm} \label{2power}
For any graph $G$, we have$$\depth S/I(G)^{(2)}\geq \alpha_2(G)-1.$$
\end{thm}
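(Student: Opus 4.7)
The plan is induction on $|E(G)|$. If $G$ has no edges the bound is trivial, so fix an edge $xy \in E(G)$ and consider the short exact sequence
\begin{align*}
0 \longrightarrow S/\bigl(I(G)^{(2)} : xy\bigr) \xrightarrow{\ \cdot xy\ } S/I(G)^{(2)} \longrightarrow S/\bigl(I(G)^{(2)}, xy\bigr) \longrightarrow 0.
\end{align*}
By the depth lemma it suffices to bound each outer depth from below by $\alpha_2(G) - 1$.

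For the colon term I would apply Lemma \ref{intsec} to rewrite $(I(G)^{(2)} : xy) = (I(G):x) \cap (I(G):y)$. The standard description $(I(G):x) = (N_G(x)) + I(G \setminus N_G[x])$ shows that the variable $x$ acts freely on $S/(I(G):x)$, so Corollary \ref{spn} and Lemma \ref{packdel1} (with $W = \{x\}$) give $\depth S/(I(G):x) \geq 1 + \alpha_2(G \setminus N_G[x]) \geq \alpha_2(G)$, and symmetrically for $y$. The sum $(I(G):x) + (I(G):y)$ simplifies to $(N_G[x] \cup N_G[y]) + I(G \setminus (N_G[x] \cup N_G[y]))$, whose quotient depth is at least $\alpha_2(G) - 2$ by Lemma \ref{packdel1} with $W = \{x, y\}$. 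Feeding these bounds into the Mayer--Vietoris sequence for the pair $(I(G):x)$, $(I(G):y)$ then yields $\depth S/\bigl((I(G):x) \cap (I(G):y)\bigr) \geq \min\{\alpha_2(G), \alpha_2(G) - 1\} = \alpha_2(G) - 1$.

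For the quotient $S/(I(G)^{(2)}, xy)$ the key observation is the identity $(I(G)^{(2)}, xy) = (I(G \setminus xy)^{(2)}, xy)$: the extra generators of $I(G)^{(2)}$ beyond those of $I(G \setminus xy)^{(2)}$ --- namely products of two edges of $G$ in which $xy$ occurs, together with triangle generators $xyz$ containing the edge $xy$ --- are all divisible by $xy$. Applying the inductive hypothesis to $G \setminus xy$ gives $\depth S/I(G \setminus xy)^{(2)} \geq \alpha_2(G \setminus xy) - 1 \geq \alpha_2(G) - 1$, since removing an edge cannot decrease $\alpha_2$. A further short exact sequence on $S/I(G \setminus xy)^{(2)}$ with multiplication by $xy$, combined with the depth lemma, then reduces the problem to establishing $\depth S/(I(G \setminus xy)^{(2)} : xy) \geq \alpha_2(G)$.

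This last bound is the main obstacle. Since $xy$ is no longer an edge of $G \setminus xy$, Lemma \ref{intsec} does not directly apply. Instead, a direct primary decomposition yields
\[
(I(G \setminus xy)^{(2)} : xy) \;=\; \bigcap_{\substack{C \in \mathcal{C}(G \setminus xy) \\ |C \cap \{x,y\}| = 1}} \mathfrak{p}_C \;\cap\; \bigcap_{\substack{C \in \mathcal{C}(G \setminus xy) \\ C \cap \{x,y\} = \emptyset}} \mathfrak{p}_C^2,
\]
and a further Mayer--Vietoris argument, splitting the intersection according to how a minimal vertex cover of $G \setminus xy$ meets $\{x, y\}$ and exploiting Lemmas \ref{packdel1} and \ref{packdel} together with Corollary \ref{spn}, should produce the required bound $\alpha_2(G)$ and close the induction.
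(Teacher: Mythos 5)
Your treatment of the colon term is fine: Lemma \ref{intsec} applies because $xy\in E(G)$, and the Mayer--Vietoris sequence for $(I(G):x)$ and $(I(G):y)$ together with Corollary \ref{spn} and Lemma \ref{packdel1} does give $\depth S/(I(G)^{(2)}:xy)\geq \alpha_2(G)-1$; this is essentially the computation the paper performs in parts (a)--(c) of its proof. The problem is the other half of your short exact sequence. After the (correct, but uncited) identity $(I(G)^{(2)},xy)=(I(G\setminus xy)^{(2)},xy)$ --- which already relies on the fact that $I(G)^{(2)}=I(G)^2+(\text{triangle generators})$, a result you should at least quote --- you reduce everything to the bound $\depth S/\bigl(I(G\setminus xy)^{(2)}:xy\bigr)\geq \alpha_2(G)$. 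This is where the argument stops being a proof: you describe the primary decomposition of this colon ideal, note that it splits into pieces $\bigcap_{|C\cap\{x,y\}|=1}\mathfrak{p}_C$ and $\bigcap_{C\cap\{x,y\}=\emptyset}\mathfrak{p}_C^2$, and assert that a further Mayer--Vietoris argument ``should'' give the bound. But neither piece is the edge ideal (or symbolic square of the edge ideal) of a graph in any evident way, so Corollary \ref{spn}, Lemma \ref{packdel1} and Lemma \ref{packdel} do not apply to them directly, and no estimate for their depths or for the depth of their sum is supplied. Since $xy$ is \emph{not} an edge of $G\setminus xy$, Lemma \ref{intsec} is unavailable precisely here, and this unproved inequality is the entire content of the theorem; the induction does not close without it.

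It is worth seeing how the paper sidesteps exactly this obstruction. Instead of adding the single edge $xy$ to $I^{(2)}$ (which forces you to colon a symbolic square of a \emph{smaller} edge set by a monomial that is no longer an edge), the paper adds the edges $u_1,\ldots,u_m$ of $G$ one at a time in the special order provided by Banerjee's theorem, so that the terminal quotient is $S/(I^{(2)}+I)=S/I$, handled by Corollary \ref{spn}, and every intermediate colon $\bigl((I^{(2)}+(u_1,\ldots,u_{i-1})):u_i\bigr)$ collapses to $(I^{(2)}:u_i)$ plus variables from $N_G(x)\cup N_G(y)$. In that setup $u_i=xy$ is always an edge of the ambient graph $G\setminus A$, so Lemma \ref{intsec} applies and the Mayer--Vietoris computation you carried out for the colon term does all the work. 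If you want to salvage your route, you would need an independent structural description of $\bigl(I(H)^{(2)}:xy\bigr)$ for a non-edge $xy$ of $H$ strong enough to yield the bound $\alpha_2(G)$; as written, that step is a conjecture, not a proof.
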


\begin{proof}
Set $I:=I(G)$ and let $G(I)=\{u_1, \ldots, u_m\}$ be the set of minimal monomial generators of $I$. Using \cite[Theorem 4.12]{b}, we may assume that for every pair of integers $1\leq j< i\leq m$, one of the following conditions holds.
\begin{itemize}
\item [(i)] $(u_j:u_i) \subseteq (I^2:u_i)\subseteq (I^{(2)}:u_i)$; or
\item [(ii)] there exists an integer $k\leq i-1$ such that $(u_k:u_i)$ is generated by a subset of variables, and $(u_j:u_i)\subseteq (u_k:u_i)$.
\end{itemize}
For every integer $i$ with $1\leq i\leq m$ consider the short exact sequence
\begin{align*}
0 & \longrightarrow \frac{S}{(I^{(2)}+(u_1, \ldots, u_{i-1})):u_i}\longrightarrow \frac{S}{I^{(2)}+(u_1, \ldots, u_{i-1})}\\ & \longrightarrow \frac{S}{I^{(2)}+(u_1, \ldots, u_i)}\longrightarrow 0.
\end{align*}
It follows from depth Lemma \cite[Proposition 1.2.9]{bh} that
\begin{align*}
& \depth S/(I^{(2)}+(u_1, \ldots, u_{i-1}))\\ & \geq \min\big\{\depth S/((I^{(2)}+(u_1, \ldots, u_{i-1})):u_i), \depth S/(I^{(2)}+(u_1, \ldots, u_i))\big\}.
\end{align*}
Consequently,
\begin{align*}
& \depth S/I^{(2)}\geq\\ & \min\big\{\depth S/(I^{(2)}+I), \depth S/((I^{(2)}+(u_1, \ldots, u_{i-1})):u_i)\mid 1\leq i\leq m\big\}=\\ & \min\big\{\depth S/I, \depth S/((I^{(2)}+(u_1, \ldots, u_{i-1})):u_i)\mid 1\leq i\leq m\big\} \geq\\ & \min\big\{\alpha_2(G), \depth S/((I^{(2)}+(u_1, \ldots, u_{i-1})):u_i)\mid 1\leq i\leq m\big\},
\end{align*}
where the last inequality follows from Corollary \ref{spn}. Hence, it is enough to show that$$\depth S/((I^{(2)}+(u_1, \ldots, u_{i-1})):u_i)\geq \alpha_2(G)-1,$$for every integer $i$ with $1\leq i\leq m$.

Fix an integer $i$ with $1\leq i\leq m$ and assume that $u_i=xy$. We know from (i) and (ii) above that
\[
\begin{array}{rl}
\big((I^{(2)}, u_1, \ldots, u_{i-1}):u_i\big)=(I^{(2)}:u_i)+({\rm some \ variables}).
\end{array} \tag{7} \label{6}
\]
Let $A$ be the set of variables appearing in $\big((I^{(2)}, u_1, \ldots, u_{i-1}):u_i\big)$. Assume that $x\in A$. This means that $x^2y$ belongs to the ideal $(I^{(2)}, u_1, \ldots, u_{i-1})$. Since $u_1, \ldots, u_{i-1}$ do not divide $x^2y$, we deduce that $x^2y\in I(G)^{(2)}$. But this is a contradiction, as $C:=V(G)\setminus \{x\}$ is a vertex cover of $G$ with $x^2y\notin \mathfrak{p}_C^2$. Therefore, $x\notin A$. Similarly, $y\notin A$. It follows from $x,y\notin A$ and equality (\ref{6}) that
\begin{align*}
& \big((I^{(2)}, u_1, \ldots, u_{i-1}):u_i\big)=\big(I^{(2)}:u_i\big)+({\rm the\ ideal\ generated\ by}\ A)\\ & =\big(I^{(2)}+ ({\rm the\ ideal\ generated\ by}\ A)):u_i\big)\\ & =\big(I(G\setminus A)^{(2)}+({\rm the\ ideal\ generated\ by}\ A)):u_i\big)\\ & =\big(I(G\setminus A)^{(2)}:u_i\big)+({\rm the\ ideal\ generated\ by}\ A).
\end{align*}
Therefore,
\[
\begin{array}{rl}
\depth S/((I^{(2)},u_1, \ldots, u_{i-1}):u_i)=\depth S_A/(I(G\setminus A)^{(2)}:u_i),
\end{array} \tag{8} \label{7}
\]
where $S_A=\mathbb{K}[x_i: 1\leq i\leq n, x_i\notin A]$. It follows from Lemma \ref{intsec} that$$(I(G\setminus A)^{(2)}:u_i)=(I(G\setminus A):x)\cap(I(G\setminus A):y).$$Consider the following short exact sequence.
\begin{align*}
0 & \longrightarrow \frac{S_A}{(I(G\setminus A)^{(2)}:u_i)}\longrightarrow \frac{S_A}{(I(G\setminus A):x)}\oplus\frac{S_A}{(I(G\setminus A):y)}\\ & \longrightarrow \frac{S_A}{(I(G\setminus A):x)+(I(G\setminus A):y)}\longrightarrow 0
\end{align*}
Applying depth Lemma \cite[Proposition 1.2.9]{bh} on the above exact sequence, it suffices to prove that
\begin{itemize}
\item [(a)] $\depth S_A/(I(G\setminus A):x) \geq \alpha_2(G)-1$,
\item [(b)] $\depth S_A/(I(G\setminus A):y) \geq \alpha_2(G)-1$, and
\item [(c)] $\depth S_A/((I(G\setminus A):x)+(I(G\setminus A):y))\geq \alpha_2(G)-2$.
\end{itemize}

To prove (a), note that
\begin{align*}
& (I(G\setminus A):x)=I(G\setminus(A\cup N_{G\setminus A}[x]))+({\rm the\ ideal\ generated\ by}\ N_{G\setminus A}(x))\\ & =I(G\setminus(A\cup N_G[x]))+({\rm the\ ideal\ generated\ by}\ N_{G\setminus A}(x)).
\end{align*}
Hence,
\[
\begin{array}{rl}
\depth S_A/(I(G\setminus A):x)=\depth S'/I(G\setminus(A\cup N_G[x])),
\end{array} \tag{9} \label{8}
\]
where $S'=\mathbb{K}[x_i: 1\leq i\leq n, x_i\notin A\cup N_G(x)]$. Obviously, $x$ is a regular element of $S'/I(G\setminus(A\cup N_G[x]))$. Therefor, Corollary \ref{spn} implies that
\[
\begin{array}{rl}
\depth S'/I(G\setminus(A\cup N_G[x]))\geq \alpha_2(G\setminus(A\cup N_G[x]))+1.
\end{array} \tag{10} \label{9}
\]
Assume that $A\subseteq N_G(x)\cup N_G(y)$. It then follows from Lemma \ref{packdel1} that$$\alpha_2(G\setminus(A\cup N_G[x]))\geq \alpha_2(G)-2.$$ Hence, we conclude from equality (\ref{8}) and inequality (\ref{9}) that$$\depth S_A/(I(G\setminus A):x)\geq \alpha_2(G)-1.$$Thus, to complete the proof of (a), we only need to show that $A\subseteq N_G(x)\cup N_G(y)$.

Let $z$ be an arbitrary variable in $A$ and suppose $z\notin N_G(x)\cup N_G(y)$. Then the only edge dividing $zu_i=zxy$ is $u_i$. In particular,
\[
\begin{array}{rl}
z\notin \big((u_1, \ldots, u_{i-1}):u_i).
\end{array} \tag{11} \label{10}
\]
Moreover, since $\{z,x\}$ is an independent of subset of vertices of $G$, we conclude that $C'=V(G)\setminus \{z,x\}$ is a vertex cover of $G$ with $zu_i=zxy\notin \mathfrak{p}_C^2$. Thus, $zu_i\notin I^{(2)}$. This means that $z\notin (I^{(2)}:u_i)$. This, together with (\ref{10}) implies that$$z\notin\big((I^{(2)}, u_1, \ldots, u_{i-1}):u_i\big),$$which is a contradiction. Therefore, $z\in N_G(x)\cup N_G(y)$. Hence, $A\subseteq N_G(x)\cup N_G(y)$ and this completes the proof of (a). The proof of (b) is similar to the proof of (a). We now prove (c).

Note that
\begin{align*}
& (I(G\setminus A):x)+(I(G\setminus A):y)=\\ & I(G\setminus(A\cup N_{G\setminus A}[x]\cup N_{G\setminus A}[y]))+({\rm the\ ideal\ generated\ by}\ N_{G\setminus A}[x]\cup N_{G\setminus A}[y])=\\ & I(G\setminus(A\cup N_G[x]\cup N_G[y]))+({\rm the\ ideal\ generated\ by}\ N_{G\setminus A}[x]\cup N_{G\setminus A}[y])=\\ & I(G\setminus(N_G[x]\cup N_G[y]))+({\rm the\ ideal\ generated\ by}\ N_{G\setminus A}[x]\cup N_{G\setminus A}[y]),
\end{align*}
Where the last equality follows from $A\subseteq \big(N_G(x)\cup N_G(y)\big)$. We conclude that
\[
\begin{array}{rl}
\depth S_A/((I(G\setminus A):x)+(I(G\setminus A):y))=\depth S''/I(G\setminus(N_G[x]\cup N_G[y])),
\end{array} \tag{12} \label{11}
\]
where $S''=\mathbb{K}\big[x_i: 1\leq i\leq n, x_i\notin N_G[x]\cup N_G[y]\big]$. Using Corollary \ref{spn} and Lemma \ref{packdel1}, we deuce that$$\depth S''/I(G\setminus(N_G[x]\cup N_G[y]))\geq \alpha_2(G\setminus(N_G[x]\cup N_G[y]))\geq \alpha_2(G)-2.$$Finally, the assertion of (c) follows from equality (\ref{11}) and the above inequality. This completes the proof of the theorem.
\end{proof}





\end{document}